\definecolor{darkgreen}{rgb}{0, 0.5, 0}
\newtheorem{theorem}{Theorem}
\newtheorem{lemma}[theorem]{Lemma}
\newtheorem*{theorem*}{Theorem}
\newenvironment{customthm}[1]
  {\innercustomthm}
  {\endinnercustomthm}
\newcommand\blfootnote[1]{%
  \begingroup
  \renewcommand\thefootnote{}\footnote{#1}%
  \addtocounter{footnote}{-1}%
  \endgroup
}
\newcommand{\ind}{{\rm ind \hspace{.1cm}}}
\newcommand{\C}{\mathbb{C}}
\newcommand{\eps}{\varepsilon}
\tikzstyle{vertex}=[circle, draw, inner sep=0pt, minimum size=13pt]
\tikzstyle{svertex}=[circle, draw, inner sep=0pt, minimum size=8pt]
\begin{document}

\title{Classification of contact seaweeds}

\author[*]{Vincent E. Coll, Jr.}
\author[**]{Nicholas Russoniello}

\affil[*]{\small{Dept. of Mathematics, Lehigh University, Bethlehem, PA 18015:  vec208@lehigh.edu}}
\affil[**]{\small{Dept. of Mathematics, College of William \& Mary, Williamsburg, VA 23185:  nrussoniello@wm.edu}}


\maketitle
\begin{abstract}
\noindent
A celebrated result of Gromov ensures the existence of a contact structure on any connected, non-compact, odd dimensional Lie group. In general, such structures are not invariant under left translation.  
The problem of finding which Lie groups admit a left-invariant contact structure
resolves to the question of determining when a Lie algebra $\mathfrak{g}$ is contact; that is, admits a one-form $\varphi\in\mathfrak{g}^*$ such that $\varphi\wedge(d\varphi)^k\neq 0.$   \\

\noindent
In full generality, this remains an open question; however we settle it for the important category of the evocatively named seaweed algebras by showing that an index-one seaweed is contact precisely when it is quasi-reductive.  Seaweeds were introduced by Dergachev and Kirillov who initiated the development of their index theory -- since completed by Joseph, Panyushev, Yakimova, and Coll, among others.
Recall that a contact Lie algebra has index one -- but not characteristically so. 
Leveraging recent work of Panyushev, Baur, Moreau, Duflo, Khalgui, Torasso, Yakimova, and Ammari, who collectively classified quasi-reductive seaweeds, our equivalence yields a full classification of contact seaweeds. We remark that since type-A and type-C seaweeds are de facto quasi-reductive (by a result of Panyushev), in these types index one alone suffices to ensure the existence of a contact form.

\noindent

\end{abstract}

\noindent
\textit{Mathematics Subject Classification 2020}: 17Bxx, 53D10

\noindent 
\textit{Key Words and Phrases}: contact Lie algebra, contact structure, seaweeds, quasi-reductive, stable

\blfootnote{Vincent E. Coll, Jr.,  vec208@lehigh.edu, is the corresponding author.
}


\section{Introduction}

This short note, the coda of a sequence of three papers, provides, per the following theorem,  a complete classification of contact seaweed (or biparabolic) Lie algebras.


\begin{customthm}{\ref{thm:main}}
An index-one seaweed is contact if and only if it is quasi-reductive.
\end{customthm}

The first two papers of this sequence establish Theorem \ref{thm:main} for type-A (Coll et al. \textbf{\cite{contactA}}, 2022)  and type-C (Coll and Russoniello \textbf{\cite{contactC}}, 2022) seaweeds by showing, in these classical cases, that index one is sufficient to identify that the seaweed is contact. This is unexpected, as a contact Lie algebra has index one -- but not characteristically so.   These prefatory papers use constructive 
combinatorial methods to develop explicit contact forms for index-one seaweeds.  Panyushev's result (\textbf{\cite{PanRais}}, 2005) that seaweeds in type-A and type-C are quasi-reductive provided a clue to the final form of the above theorem.  And while the comprehensive Theorem \ref{thm:main} subsumes the results of the first two papers, the methods employed here to establish the classification are algebraic and existential, rather than constructive. More specifically, we leverage recent results presented in a series of articles by Ammari (\textbf{\cite{sqparex}}, 2014; \textbf{\cite{sqpar}}, 2014; and \textbf{\cite{sq}}, 2022), which establish that the ``stability'' of a seaweed algebra implies its quasi-reductivity.  This yields the proof heuristic for Theorem \ref{thm:main} in the following diagrammatic form.

$$\begin{tikzpicture}
    \node at (0,0) {CONTACT};
    \node at (-2,1.5) {QR};
    \node at (2.2,1.5) {STABLE};
    
    \node at (0,1.75) {\footnotesize(Ammari)};
    
    \draw[-{Implies},double equal sign distance] (-1.8,1.3)--(-0.8,0.2);
    \draw[-{Implies},double equal sign distance] (0.8,0.2)--(1.5,1.3);
    \draw[-{Implies},double equal sign distance] (1.3,1.5)--(-1.5,1.5);
    
    
\end{tikzpicture}$$

\noindent
\textbf{Remark.}  At first blush, Theorem~\ref{thm:main} appears to be a bare equivalence of the notions of contact and quasi-reductivity; however, the collective works of Panyushev \textbf{\cite{PanRais}}, Baur and Moreau (\textbf{\cite{BM}}, 2011), Duflo et al. (\textbf{\cite{duflo}}, 2012), Panyushev and Yakimova (\textbf{\cite{PY}}, 2018), and Ammari \textbf{\cite{sq}} have yielded a complete, explicit classification of quasi-reductive seaweeds. It follows that Theorem~\ref{thm:main} provides a concrete classification in the standard meaning of the term.



\medskip

The structure of this note is as follows.
In Section 2 we acknowledge some preliminary technical results before moving on to the proof of Theorem~\ref{thm:main} in Section 3.  Section 4 is a brief Epilogue which provides a broader context for Theorem~\ref{thm:main}. We are thus able to posit several interesting follow-up questions which merit ongoing investigation.  


\section{Preliminaries}\label{sec:prelim}

\textit{Convention}: All Lie algebras ($\mathfrak{g}, [-, -])$ are over $\C$.
\medskip

A $(2k+1)-$dimensional Lie algebra $\mathfrak{g}$ is called \textit{contact} if it admits a \textit{contact form}; that is, a one-form $\varphi\in\mathfrak{g}^*$ such that $\varphi\wedge(d\varphi)^k\neq 0.$ If $G$ is an underlying Lie group of $\mathfrak{g}$ then each such $\varphi$ gives rise to a left-invariant contact structure on $G,$ and $\varphi\wedge(d\varphi)^k$ is a volume form on $G$.  A now-classical result of Gromov (\textbf{\cite{Gromov}}, 1969) ensures the existence of a contact structure on any connected, non-compact,\footnote{The groups underlying the Lie algebras here considered are non-compact, since all compact complex Lie groups are necessarily abelian.} odd-dimensional Lie group. But, in general, such structures are not invariant under left translations of the Lie group. The problem of finding which Lie groups admit a left-invariant contact structure resolves to cataloging contact Lie algebras and remains a problem of ongoing and topical interest (see \textbf{\cite{defcontact}}, \textbf{\cite{ht2}}, \textbf{\cite{Diatta}}, \textbf{\cite{GR}}, \textbf{\cite{ContactToral}}, and \textbf{\cite{InvContact}}).  
Here, we offer a complete classification for seaweed algebras.


Recall that a \textit{seaweed algebra}, or simply ``seaweed,'' is a subalgebra of a reductive Lie algebra $\mathfrak{r}$ defined by the intersection of two parabolic subalgebras of $\mathfrak{r}$ whose sum is $\mathfrak{r}$.\footnote{This basis-free definition of seaweed has prompted others to refer to these algebras as \textit{biparabolic}. The evocative ``seaweed'' derives from the wavy shape displayed when these algebras are represented in their matrix form in the classical types.}  Seaweeds were first introduced by Dergachev and Kirillov (\textbf{\cite{DK}}, 2000) where the authors studied the ``index" of such algebras in the case where $\mathfrak{r}=\mathfrak{gl}(n).$
Formally, the \textit{index} of a  Lie algebra $\mathfrak{g}$ is an algebraic invariant first introduced by Dixmier (\textbf{\cite{D}}, 1974) and defined as follows: $$\ind\mathfrak{g}=\min_{\varphi\in\mathfrak{g}^*}\dim(\ker(B_{\varphi})),$$ where $B_{\varphi}$ is the skew-symmetric \textit{Kirillov form} given by $B_{\varphi}(x,y)=-d\varphi(x,y)=\varphi([x,y]),$ for all $x,y\in\mathfrak{g}.$ Contact algebras necessarily have index one.



To round out the needed notions for our study, we require  the concepts of ``quasi-reductivity" and ``stability" of a Lie algebra.  
If $\mathfrak{g}$ is an algebraic Lie algebra (in particular, a seaweed) with center $Z(\mathfrak{g}),$ then $\mathfrak{g}$ is \textit{quasi-reductive} if there exists a one-form $\varphi\in\mathfrak{g}^*$ such that $\faktor{\ker(B_{\varphi})}{Z(\mathfrak{g})}$ is a reductive Lie algebra whose center consists of semisimple elements of $\mathfrak{g}.$ Such a one-form $\varphi$ is called a \textit{form of reductive type.} Quasi-reductive Lie groups were first introduced in (Duflo \textbf{\cite{Duflo1982}}, 1982), where it is shown, in particular, that the coadjoint orbits of forms of reductive type can be associated to square integrable unitary representations of the group (see \textbf{\cite{anh}}, \textbf{\cite{duflo}}, and \textbf{\cite{MY}}). Recent work has led to the classification of quasi-reductive seaweeds (see \textbf{\cite{sq}},\textbf{\cite{BM}},\textbf{\cite{duflo}},\textbf{\cite{PanRais}}, and \textbf{\cite{PY}}). Of interest here is a result of Duflo et al. (\textbf{\cite{duflo}}, Th\'{e}or\`{e}me 3.4.1) which establishes, in particular, that all quasi-reductive seaweeds admit a \textit{stable one-form}, i.e., a one-form $\varphi\in\mathfrak{g}^*$ for which there is a Zariski open neighborhood $V\ni\varphi$ such that, for all $\psi\in V,$ the kernels $\ker(B_{\varphi})$ and $\ker(B_{\psi})$ are conjugate under the algebraic adjoint group of $\mathfrak{g}.$ As we will consider stable one-forms in the proof of Theorem~\ref{thm:main}, we provide the following useful equivalence.


\begin{lemma}[Tauvel and Yu \textbf{\cite{TY}}, Th\'{e}or\`{e}me 1.7, 2004; cf. Ammari \textbf{\cite{sqpar}}, 2014]\label{lem:kerstable}
Let $\mathfrak{g}$ be algebraic. A one-form $\varphi$ is stable if and only if $$[\ker(B_{\varphi}),\mathfrak{g}]\cap\ker(B_{\varphi})=\{0\}.$$
\end{lemma}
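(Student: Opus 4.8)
The plan is to prove the equivalence characterizing stable one-forms via the kernel condition $[\ker(B_\varphi),\mathfrak{g}]\cap\ker(B_\varphi)=\{0\}$. I would work with the adjoint action on the dual space $\mathfrak{g}^*$, where for $\varphi\in\mathfrak{g}^*$ the tangent space to the coadjoint orbit $G\cdot\varphi$ at $\varphi$ is exactly $\mathfrak{g}\cdot\varphi=\{x\cdot\varphi : x\in\mathfrak{g}\}$, and the stabilizer $\mathfrak{g}^\varphi$ equals $\ker(B_\varphi)$ (indeed $x\in\ker(B_\varphi)$ means $\varphi([x,y])=0$ for all $y$, i.e. $x\cdot\varphi=0$). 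The index $\ind\mathfrak{g}$ is then $\min_\varphi\dim\mathfrak{g}^\varphi=\dim\mathfrak{g}-\max_\varphi\dim(G\cdot\varphi)$, and stability of $\varphi$ is the statement that $\varphi$ lies in the open set where the orbit dimension (equivalently $\dim\ker B_\varphi$) is locally constant and the kernels are conjugate — classically, that $\varphi$ is a \emph{regular} point of the coadjoint action in a strong sense.

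First I would recast the stability hypothesis infinitesimally. The map $\varphi\mapsto\dim\ker(B_\varphi)$ is upper semicontinuous in the Zariski topology, so the set $\Omega$ where it attains its minimum value $\ind\mathfrak{g}$ is open and dense; one shows that $\varphi$ is stable precisely when $\varphi\in\Omega$ \emph{and}, on a neighborhood, the kernels form a single orbit — but by a standard argument (the kernels over $\Omega$ are the fibers of a morphism whose generic fiber dimension is constant, and the adjoint group acts on them) membership in $\Omega$ is in fact equivalent to stability for algebraic $\mathfrak{g}$. So the real content becomes: $\varphi\in\Omega$ (i.e. $\dim\ker B_\varphi=\ind\mathfrak{g}$) if and only if $[\ker(B_\varphi),\mathfrak{g}]\cap\ker(B_\varphi)=\{0\}$. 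For the forward direction, I would differentiate: if $\varphi$ is stable and $z\in\ker(B_\varphi)$ with $\mathrm{ad}^*(x)\varphi$ something in $\ker B_\varphi$... more precisely, consider the path $\varphi_t=\varphi+t\,\psi$ with $\psi=x\cdot\varphi'$ and use that $\dim\ker B_{\varphi_t}$ is constant to force the relevant bracket to vanish; the computation is the standard one showing the normal space to the orbit pairs trivially with the kernel at a regular point.

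For the reverse direction, I would argue by a dimension count: the condition $[\ker(B_\varphi),\mathfrak{g}]\cap\ker(B_\varphi)=\{0\}$ says the bilinear pairing $\ker(B_\varphi)\times\mathfrak{g}\to\mathfrak{g}/\ker(B_\varphi)$, $(z,y)\mapsto [z,y]\bmod\ker(B_\varphi)$, or dually the restriction of the relevant form, is nondegenerate on the first factor — this is exactly the statement that the differential of the orbit map has locally constant rank near $\varphi$, which by upper semicontinuity pins $\varphi$ into $\Omega$. Concretely: choose a complement $\mathfrak{m}$ to $\ker(B_\varphi)$ in $\mathfrak{g}$; the hypothesis implies that near $\varphi$ the quadratic form $B_{\psi}$ restricted to $\mathfrak{m}$ stays nondegenerate (an open condition), so $\dim\ker B_\psi\le\dim\ker B_\varphi$ there, forcing equality at $\varphi$ by minimality. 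The main obstacle I anticipate is handling the \emph{conjugacy} part of the definition of stable cleanly — showing that local constancy of $\dim\ker B_\varphi$ upgrades to the kernels being conjugate under the algebraic adjoint group — since this requires either invoking that the adjoint group orbits in the ``index variety'' are open (a result in the algebraic-group literature) or reproducing that argument; everything else is the infinitesimal bookkeeping sketched above, which is routine. Since the paper cites Tauvel–Yu and Ammari for this lemma, in the writeup I would likely state the reduction to $\varphi\in\Omega$ as the crux and refer to those sources for the conjugacy upgrade rather than reprove it in full.
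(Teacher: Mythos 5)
The paper does not actually prove this lemma; it is imported from Tauvel--Yu (Th\'eor\`eme 1.7) and Ammari, so your sketch must stand on its own merits, and it does not. The pivotal reduction you make --- that for algebraic $\mathfrak{g}$ stability of $\varphi$ is equivalent to $\varphi$ lying in the open set $\Omega$ where $\dim\ker(B_{\varphi})=\ind\mathfrak{g}$ --- is false. The set $\Omega$ is nonempty (open and dense) for \emph{every} Lie algebra, so under your reduction every algebraic Lie algebra would admit a stable form; combined with Theorem~\ref{thm:sq} this would make every seaweed quasi-reductive, and Theorem~\ref{thm:main} would collapse to ``index one implies contact,'' which is precisely what the paper emphasizes is not true (non-quasi-reductive index-one seaweeds exist, e.g.\ in types B and D). What is true is only one direction: stable implies regular, but regular does not imply stable, and likewise the bracket condition $[\ker(B_{\varphi}),\mathfrak{g}]\cap\ker(B_{\varphi})=\{0\}$ is strictly stronger than regularity. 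So the equivalence you substitute for the lemma (``$\varphi\in\Omega$ iff the bracket condition'') is both a different statement and a false one.

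The two directions as sketched also carry no weight on their own. In your ``reverse'' direction the hypothesis is never used: $B_{\varphi}$ restricted to any complement $\mathfrak{m}$ of $\ker(B_{\varphi})$ is automatically nondegenerate, so the openness argument merely reproduces upper semicontinuity of $\dim\ker(B_{\psi})$, and ``forcing equality at $\varphi$ by minimality'' is a non sequitur; moreover, even exact local constancy of $\dim\ker(B_{\psi})$ is not the definition of stability --- the conjugacy of the kernels under the algebraic adjoint group is the entire content, and that is exactly the part you defer to the literature. In the forward direction, ``differentiate\dots the computation is the standard one'' is not an argument. A genuine proof (this is what Tauvel--Yu do) must address conjugacy directly: from $[\ker(B_{\varphi}),\mathfrak{g}]\cap\ker(B_{\varphi})=\{0\}$ one builds a decomposition adapted to $\varphi$ and shows that every form in a suitable Zariski neighborhood is moved by the adjoint group to one with the same stabilizer, and the converse for algebraic $\mathfrak{g}$ uses algebraicity essentially; neither step is semicontinuity bookkeeping. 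If you intend to treat the lemma as a quoted result, cite it as the paper does; if you intend to prove it, the conjugacy statement must be established rather than assumed.
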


\noindent We call a Lie algebra that admits a stable one-form a \textit{stable Lie algebra}, and so by Duflo et al., each quasi-reductive seaweed is stable. It remained an open question as to whether all stable seaweeds were quasi-reductive until Ammari showed -- in a series of articles \textbf{\cite{sq}}, \textbf{\cite{sqparex}}, and \textbf{\cite{sqpar}} -- that, for seaweeds, the two notions are commensurate.

\begin{theorem}[Ammari \textbf{\cite{sq}}, Th\'{e}or\`{e}me 5.4; \textbf{\cite{sqparex}}, Propositions 4.22, 4.24, 5.27, and 5.28; \textbf{\cite{sqpar}}, Th\'{e}or\`{e}me 4.2.2]\label{thm:sq}
\centering
A seaweed is quasi-reductive if and only if it is stable.
\end{theorem}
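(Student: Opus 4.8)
The plan is to prove the two implications separately, treating $\mathrm{QR}\Rightarrow\mathrm{stable}$ as the soft direction and $\mathrm{stable}\Rightarrow\mathrm{QR}$ as the substantive one. For the forward direction I would simply invoke the result of Duflo et al.~\textbf{\cite{duflo}} recorded in the excerpt above, which already guarantees that every quasi-reductive seaweed admits a stable one-form. The entire weight of the theorem therefore rests on the converse: producing, from a stable seaweed, a \emph{form of reductive type}. I would organize the remaining work around a single structural reduction, and then discharge that reduction using the combinatorial index theory special to seaweeds.

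The reduction runs as follows. Write $\mathfrak{g}^\varphi=\ker(B_\varphi)$ for the stabilizer of $\varphi$; this is always a subalgebra, being the isotropy algebra of $\varphi$ under the coadjoint action. If $\varphi$ is stable, then the Tauvel--Yu criterion of Lemma~\ref{lem:kerstable} gives
$$[\mathfrak{g}^\varphi,\mathfrak{g}^\varphi]\subseteq[\mathfrak{g}^\varphi,\mathfrak{g}]\cap\mathfrak{g}^\varphi=\{0\},$$
so a stable form has \emph{abelian} stabilizer. Consequently $\mathfrak{g}^\varphi/Z(\mathfrak{g})$ is automatically reductive (being abelian), and the definition of quasi-reductivity collapses to a single remaining demand: that the center of $\mathfrak{g}^\varphi/Z(\mathfrak{g})$ --- that is, all of $\mathfrak{g}^\varphi$ modulo $Z(\mathfrak{g})$ --- consist of semisimple elements. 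Thus I would reduce the theorem to proving that a stable seaweed admits a stable one-form whose stabilizer is \emph{toral} modulo the center.

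Establishing this toral property is where the biparabolic structure must be used in an essential way, since the implication $\mathrm{stable}\Rightarrow\mathrm{QR}$ is \emph{false} for general Lie algebras. Here I would pass to the model $\mathfrak{q}=\mathfrak{p}\cap\mathfrak{p}'$ inside a reductive $\mathfrak{r}$ and to the index theory initiated by Dergachev--Kirillov~\textbf{\cite{DK}} and completed by Joseph, Panyushev, Yakimova, and Coll. Using the root-theoretic description of a regular one-form --- the meander graph in type $A$, and its analogues in the remaining types --- I would write down an explicit generic stable $\varphi$ and compute $\mathfrak{g}^\varphi$ in terms of the associated root data, with the aim of exhibiting generators lying inside a maximal torus of $\mathfrak{r}$ and of absorbing any residual nilpotent part into $Z(\mathfrak{g})$. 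Comparing the resulting combinatorial characterization of stability against the existing explicit classification of quasi-reductive seaweeds (Panyushev~\textbf{\cite{PanRais}}, Baur--Moreau~\textbf{\cite{BM}}, Duflo et al.~\textbf{\cite{duflo}}, Panyushev--Yakimova~\textbf{\cite{PY}}) would then match the two families type by type.

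The main obstacle is precisely this toral step. There is no type-free reason for the abelian stabilizer of a stable form to be diagonalizable modulo the center --- and, as noted, it need not be in an arbitrary Lie algebra --- so the argument cannot be made uniform. Verifying it forces a descent through the classical types $B$, $C$, and $D$ and, with no single conceptual argument available, a finite but delicate case analysis of the exceptional types. This is exactly why Ammari's proof is distributed across \textbf{\cite{sq}}, \textbf{\cite{sqparex}}, and \textbf{\cite{sqpar}} rather than compressed into one uniform argument, and I would expect any alternative route to confront the same case-by-case verification.
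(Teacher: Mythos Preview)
The paper does not supply its own proof of Theorem~\ref{thm:sq}; it is quoted as an external result of Ammari, with the citations \textbf{\cite{sq}}, \textbf{\cite{sqparex}}, \textbf{\cite{sqpar}} carrying the entire burden. There is therefore nothing in the paper to compare your proposal against.

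That said, your outline is a faithful reconstruction of the expected architecture. The forward direction is indeed immediate from Duflo et al.~\textbf{\cite{duflo}} as recorded in the paper. Your reduction for the converse is correct: Lemma~\ref{lem:kerstable} forces $[\mathfrak{g}^\varphi,\mathfrak{g}^\varphi]\subset[\mathfrak{g}^\varphi,\mathfrak{g}]\cap\mathfrak{g}^\varphi=\{0\}$, so the stabilizer of a stable form is abelian and the quasi-reductivity condition collapses to the single demand that $\mathfrak{g}^\varphi$ be toral modulo $Z(\mathfrak{g})$. You are also right that this residual semisimplicity step admits no type-free argument --- the implication $\textup{stable}\Rightarrow\textup{QR}$ genuinely fails outside the seaweed setting --- which is precisely why Ammari's verification is spread across three papers ($\mathfrak{so}(n)$, the exceptional parabolics, and the general biparabolic case). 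Your proposal is thus an accurate high-level map of Ammari's program rather than an alternative to it.
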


With the assemblage of Lemma~\ref{lem:kerstable} and Theorem~\ref{thm:sq}, we have the groundwork in place to proceed to the algebraic classification of contact seaweeds in the next section.

\medskip


\noindent

\section{Proof of the Main Result}\label{sec:algclass}
In this section, we provide a proof of Theorem~\ref{thm:main}. We first establish some terminology and two technical lemmas, the first of which is a classical characterization of contact Lie algebras.

\begin{lemma}\label{lem:kernel}
Let $\mathfrak{g}$ be an arbitrary Lie algebra. If $\ind\mathfrak{g}=1,$ then a regular one-form $\varphi\in\mathfrak{g}^*$ with $\ker(B_{\varphi})=\mathbb{C}x$ is contact if and only if $\varphi(x)\neq 0.$
\end{lemma}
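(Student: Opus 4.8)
The plan is to reduce the statement to a single computation in exterior algebra governed by the radical of $B_{\varphi}$. First I would record that $\ind\mathfrak{g}=1$ forces $\dim\mathfrak{g}$ to be odd, say $\dim\mathfrak{g}=2k+1$, since the rank of any skew-symmetric bilinear form is even; thus in this setting ``$\varphi$ is contact'' means precisely $\varphi\wedge(d\varphi)^{k}\neq 0$ in $\Lambda^{2k+1}\mathfrak{g}^{*}$. Since $d\varphi=-B_{\varphi}$ as alternating bilinear forms, the overall sign is irrelevant and it suffices to determine when $\varphi\wedge B_{\varphi}^{\,k}\neq 0$.

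Next I would choose a basis adapted to the radical. As $\varphi$ is regular with $\ker(B_{\varphi})=\mathbb{C}x$, fix a complement $W$ so that $\mathfrak{g}=\mathbb{C}x\oplus W$ with $\dim W=2k$, and note that $B_{\varphi}|_{W}$ is nondegenerate. Pick a basis $e_{1},\dots,e_{2k}$ of $W$, set $e_{0}=x$, and let $e^{0},\dots,e^{2k}$ be the dual basis. The key observation is that $B_{\varphi}(x,-)=0$, so in the expansion $B_{\varphi}=\sum_{i<j}c_{ij}\,e^{i}\wedge e^{j}$ every coefficient $c_{0j}$ vanishes; hence $B_{\varphi}$ is pulled back from the $2k$-dimensional quotient $\mathfrak{g}/\mathbb{C}x$, where it is nondegenerate. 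Consequently its $k$-th exterior power is a nonzero multiple of the volume form of $W$:
$$B_{\varphi}^{\,k}=\lambda\, e^{1}\wedge\cdots\wedge e^{2k},\qquad \lambda\neq 0$$
(concretely, $\lambda=k!\,\mathrm{Pf}\bigl(B_{\varphi}|_{W}\bigr)\neq 0$).

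Finally I would wedge with $\varphi$. Writing $\varphi=\varphi(x)\,e^{0}+\sum_{i\ge 1}\varphi(e_{i})\,e^{i}$, every $e^{i}$ with $i\ge 1$ is annihilated upon wedging with $e^{1}\wedge\cdots\wedge e^{2k}$, leaving
$$\varphi\wedge B_{\varphi}^{\,k}=\lambda\,\varphi(x)\, e^{0}\wedge e^{1}\wedge\cdots\wedge e^{2k}.$$
Since $\lambda\neq 0$ and $e^{0}\wedge\cdots\wedge e^{2k}$ spans $\Lambda^{2k+1}\mathfrak{g}^{*}$, this expression is nonzero if and only if $\varphi(x)\neq 0$, which is exactly the assertion (up to replacing $B_{\varphi}$ by $-d\varphi$).

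I do not anticipate a genuine obstacle here; this is essentially a classical fact. The one point meriting care is the middle step — that the radical vector $x$ is ``invisible'' to $B_{\varphi}$, so that $B_{\varphi}^{\,k}$ is a nonzero top-degree form pulled back from $\mathfrak{g}/\mathbb{C}x$, whence $\varphi\wedge B_{\varphi}^{\,k}$ detects precisely whether $\varphi$ fails to vanish on the complementary line $\mathbb{C}x$. Everything else is bookkeeping with the adapted basis.
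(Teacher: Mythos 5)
Your proof is correct and complete: the reduction to $\varphi\wedge B_{\varphi}^{\,k}$, the observation that $B_{\varphi}$ descends to a nondegenerate form on a complement $W$ of the radical $\mathbb{C}x$ (so $B_{\varphi}^{\,k}$ is a nonzero multiple of the volume form of $W$), and the final wedge extracting the coefficient $\varphi(x)$ are exactly the standard argument. The paper itself does not prove the lemma but merely cites Lemma 23 of \textbf{\cite{contactA}}, which rests on the same classical linear-algebra computation, so your write-up is a faithful self-contained version of the proof being invoked.
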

\begin{proof}
See Lemma 23 in \textbf{\cite{contactA}}.
\end{proof}

Lemma~\ref{lem:kernel} establishes that the existence of a contact form on an index-one Lie algebra $\mathfrak{g}$ 
is equivalent to the existence of a \textit{Reeb vector} associated to a regular one-form $\varphi$; that is, a regular $\varphi\in\mathfrak{g}^*$ is contact precisely when there is some $x_{\varphi}\in\mathfrak{g}$ such that $\ker(B_{\varphi})=\mathbb{C}x_{\varphi}$ and $\varphi(x_{\varphi})=1.$ The identification of the Reeb vector yields a useful vector space decomposition of a contact Lie algebra $\mathfrak{g}=\mathbb{C}x_{\varphi}\oplus \ker(\varphi).$ The subspace $\ker(\varphi)\subset\mathfrak{g}$ defines the contact structure on an underlying Lie group $G$ of $\mathfrak{g}$ (see \textbf{\cite{Geiges}}), and such a decomposition gives rise to a particular \textit{contact basis} of $\mathfrak{g}$ with respect to $\varphi,$ which is explicitly described by the following lemma.

\begin{lemma}[Goze and Remm \textbf{\cite{GR}}, 2014; cf. Alvarez et al. \textbf{\cite{defcontact}}, 2021]\label{lem:contactbasis}
Let $\mathfrak{g}$ be a $(2k+1)-$dimensional contact Lie algebra with contact form $\varphi\in\mathfrak{g}^*.$ Then there exists a basis $\{E_1^*,E_2^*,\dots,E_{2k+1}^*\}$ of $\mathfrak{g}^*$ such that $E_1^*=\varphi$ and $$B_{\varphi}=E_2^*\wedge E_3^*+\dots +E_{2k}^*\wedge E_{2k+1}^*.$$ Moreover, if $\{E_1,E_2,\dots,E_{2k+1}\}$ is the dual basis of $\{E_1^*,E_2^*,\dots,E_{2k+1}^*\},$ we have that
\begin{align*}
    [E_{2\ell},E_{2\ell+1}]&=E_1+\sum_{r=2}^{2k+1}c_{2\ell,2\ell+1}^rE_r,~~~~~~~~~~ \ell=1,\dots,k,\text{ and }\\
    [E_i,E_j]&=\sum_{r=2}^{2k+1}c_{i,j}^rE_r,~~~~~~~~ 1\leq i,j\leq 2k+1 \text{ such that } (i,j)\neq (2\ell,2\ell+1),
\end{align*}
where $c_{i,j}^r\in\mathbb{R}$ for all $i,j,r=2,\dots,2k+1.$
\end{lemma}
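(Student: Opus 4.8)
The plan is to work dually, building the desired basis of $\mathfrak{g}^*$ rather than of $\mathfrak{g}$. Start from the contact form $\varphi$ and set $E_1^* = \varphi$. The key object is the two-form $B_\varphi = -d\varphi \in \Lambda^2 \mathfrak{g}^*$. Since $\mathfrak{g}$ is $(2k+1)$-dimensional and contact, $\varphi \wedge (d\varphi)^k \neq 0$, which forces $B_\varphi$ to have rank exactly $2k$; equivalently, $\ker(B_\varphi)$ is one-dimensional and (by Lemma~\ref{lem:kernel}, since index one) not contained in $\ker(\varphi)$. The first step is to invoke the standard linear-algebra normal form for a skew-symmetric bilinear form (a symplectic/Darboux-type basis argument applied pointwise to the constant form $B_\varphi$ on the vector space $\mathfrak{g}$): there is a basis of $\mathfrak{g}^*$ in which $B_\varphi = F_1^* \wedge F_2^* + \cdots + F_{2k-1}^* \wedge F_{2k}^*$, with $\ker(B_\varphi)$ spanned by the vector dual to the remaining basis covector. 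The content of the lemma beyond this is the compatibility with $\varphi$: we must be able to choose this symplectic basis so that the covector dual to $\ker(B_\varphi)$ is exactly $E_1^* = \varphi$, and rename the rest $E_2^*, \ldots, E_{2k+1}^*$.

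Next I would arrange that compatibility. Write $x_\varphi$ for the Reeb vector, so $\ker(B_\varphi) = \mathbb{C}x_\varphi$ and $\varphi(x_\varphi) = 1$. Decompose $\mathfrak{g} = \mathbb{C}x_\varphi \oplus \ker(\varphi)$ as in the discussion preceding the lemma. The restriction of $B_\varphi$ to $\ker(\varphi)$ is nondegenerate (its radical in $\mathfrak{g}$ is $\mathbb{C}x_\varphi$, which meets $\ker(\varphi)$ trivially), so $(\ker(\varphi), B_\varphi|_{\ker(\varphi)})$ is a $2k$-dimensional symplectic space; choose a symplectic basis $E_2, \ldots, E_{2k+1}$ of it, and set $E_1 = x_\varphi$. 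Taking $\{E_i^*\}$ to be the dual basis, one checks directly that $E_1^* = \varphi$ (because $E_1^*(x_\varphi)=1$ and $E_1^*$ annihilates $\ker(\varphi)$, which pins it down to $\varphi$), and that $B_\varphi$, having $x_\varphi$ in its kernel and the prescribed symplectic form on $\ker(\varphi)$, is exactly $E_2^*\wedge E_3^* + \cdots + E_{2k}^*\wedge E_{2k+1}^*$.

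The final step is to read off the bracket relations. For any $i,j$, expand $[E_i,E_j] = \sum_{r=1}^{2k+1} c_{i,j}^r E_r$; the coefficient $c_{i,j}^1$ is $E_1^*([E_i,E_j]) = \varphi([E_i,E_j]) = B_\varphi(E_i,E_j)$, which by the normal form equals $1$ when $(i,j) = (2\ell, 2\ell+1)$ for some $\ell$ and $0$ otherwise. This yields precisely the two displayed families of relations, with the remaining structure constants $c_{i,j}^r$ ($r \geq 2$) unconstrained. (The claim that these lie in $\mathbb{R}$ is a harmless artifact of the cited sources working over $\mathbb{R}$; over $\mathbb{C}$ one simply has $c_{i,j}^r \in \mathbb{C}$, and nothing in the sequel uses reality.)

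I expect the only real obstacle to be the bookkeeping in the compatibility step — ensuring the symplectic normalization of $B_\varphi$ can be performed \emph{inside} the hyperplane $\ker(\varphi)$ rather than merely up to an arbitrary change of basis of $\mathfrak{g}^*$ — but this is precisely handled by first splitting off the Reeb direction using Lemma~\ref{lem:kernel}, after which it reduces to the textbook Darboux normal form on a symplectic vector space. Since the statement is attributed to Goze–Remm (cf. Alvarez et al.), the cleanest exposition is simply to cite it; the sketch above indicates why it holds.
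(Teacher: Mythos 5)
The paper offers no proof of this lemma at all: it is quoted verbatim from Goze--Remm (cf.\ Alvarez et al.) as a known ``contact basis'' result, so there is no internal argument to compare yours against. Your sketch is a correct and essentially complete proof of the statement, and it is the standard one: contactness $\varphi\wedge(d\varphi)^k\neq 0$ forces $\rank B_\varphi = 2k$, hence $\ker(B_\varphi)=\mathbb{C}x_\varphi$ with $\varphi(x_\varphi)\neq 0$; split $\mathfrak{g}=\mathbb{C}x_\varphi\oplus\ker(\varphi)$, observe that $B_\varphi$ restricted to $\ker(\varphi)$ is nondegenerate (its radical is $\mathbb{C}x_\varphi$, which meets $\ker(\varphi)$ trivially), take a linear Darboux basis there, and read off the brackets from $c_{i,j}^1=\varphi([E_i,E_j])=B_\varphi(E_i,E_j)$. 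Two small remarks. First, you do not need Lemma~\ref{lem:kernel} (and hence regularity of $\varphi$) to get $\varphi(x_\varphi)\neq 0$: contracting the volume form with $x_\varphi$ gives $\iota_{x_\varphi}\bigl(\varphi\wedge(d\varphi)^k\bigr)=\varphi(x_\varphi)\,(d\varphi)^k$, which is nonzero, so the nonvanishing is immediate from contactness alone; this keeps the lemma self-contained. Second, your observation that $\varphi([E_i,E_j])$ is $1$ only for $(i,j)=(2\ell,2\ell+1)$ tacitly uses the ordered-pair convention $i<j$ (for $(2\ell+1,2\ell)$ the coefficient is $-1$), but this imprecision is already present in the statement itself, as is the harmless $c_{i,j}^r\in\mathbb{R}$ artifact from the real-case sources, which you correctly flag as $\mathbb{C}$ in the present setting.
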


We are now in a position to prove the main result of this article.

\begin{theorem}\label{thm:main}
An index-one seaweed is contact if and only if is quasi-reductive.
\end{theorem}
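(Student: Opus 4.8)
The plan is to prove the two implications separately. For the forward direction, suppose the index-one seaweed $\mathfrak{g}$ is contact, with contact form $\varphi\in\mathfrak{g}^*$. First I would note that $\varphi$ is automatically regular: from $\varphi\wedge(d\varphi)^k\ne 0$ we get $(d\varphi)^k\ne 0$, and since a skew-symmetric form of rank $2r$ annihilates its $(r+1)$-st exterior power, $d\varphi$ (equivalently $B_\varphi$) has rank $\ge 2k$; being skew on the odd-dimensional $\mathfrak{g}$ it has rank exactly $2k$, so $\dim\ker(B_\varphi)=1=\ind\mathfrak{g}$. Write $\ker(B_\varphi)=\mathbb{C}x$; by Lemma~\ref{lem:kernel}, $\varphi(x)\ne 0$. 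Then for any $z\in[\ker(B_\varphi),\mathfrak{g}]\cap\ker(B_\varphi)$, say $z=[x,y]$ for some $y$ and $z=cx$, we have $c\,\varphi(x)=\varphi(z)=B_\varphi(x,y)=0$, forcing $c=0$ and $z=0$; hence $[\ker(B_\varphi),\mathfrak{g}]\cap\ker(B_\varphi)=\{0\}$, so $\varphi$ is stable by Lemma~\ref{lem:kerstable}, and $\mathfrak{g}$ is quasi-reductive by Theorem~\ref{thm:sq}.

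For the converse, suppose the index-one seaweed $\mathfrak{g}$ is quasi-reductive. By Theorem~\ref{thm:sq} it is stable, so it admits a stable one-form $\varphi$; such a form is regular, because the Zariski-open neighborhood of $\varphi$ on which all Kirillov kernels are conjugate meets the dense open regular locus of $\mathfrak{g}^*$, forcing $\dim\ker(B_\varphi)=\ind\mathfrak{g}=1$. Write $\ker(B_\varphi)=\mathbb{C}x$; by Lemma~\ref{lem:kerstable}, $[x,\mathfrak{g}]\cap\mathbb{C}x=\{0\}$. If $\varphi(x)\ne 0$ then $\varphi$ is contact by Lemma~\ref{lem:kernel} and we are done; otherwise I would perturb $\varphi$ as follows. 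Using $[x,\mathfrak{g}]\cap\mathbb{C}x=\{0\}$, choose a subspace $M\subseteq\mathfrak{g}$ with $\mathfrak{g}=\mathbb{C}x\oplus M$ and $[x,\mathfrak{g}]\subseteq M$, and let $\xi\in\mathfrak{g}^*$ be defined by $\xi(x)=1$ and $\xi|_M=0$. For every $t\in\mathbb{C}$ we have $x\in\ker(B_{\varphi+t\xi})$, since $B_{\varphi+t\xi}(x,\cdot)=\varphi([x,\cdot])+t\,\xi([x,\cdot])$ and both terms vanish ($x\in\ker B_\varphi$ and $[x,\mathfrak{g}]\subseteq\ker\xi$). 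As $\mathrm{rank}\,B_\varphi=2k$ and the rank of the pencil $t\mapsto B_{\varphi+t\xi}$ is lower semicontinuous, $\mathrm{rank}\,B_{\varphi+t\xi}=2k$ for all but finitely many $t$; for such $t$ the form $\varphi+t\xi$ is regular with $\ker(B_{\varphi+t\xi})=\mathbb{C}x$. Choosing in addition $t\ne-\varphi(x)$ gives $(\varphi+t\xi)(x)=\varphi(x)+t\ne 0$, so $\varphi+t\xi$ is a contact form by Lemma~\ref{lem:kernel}.

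The step I expect to be the crux is the perturbation in the converse: stability only hands us a regular form whose kernel line $\mathbb{C}x$ may lie inside $\ker(\varphi)$, and the real work is to use the stability identity $[x,\mathfrak{g}]\cap\mathbb{C}x=\{0\}$ to move $\varphi$ in the $x$-direction — producing a Reeb vector — while preserving both regularity and the kernel. The supporting facts (the automatic regularity of contact forms and of stable forms, and semicontinuity of rank in a linear pencil) are routine.
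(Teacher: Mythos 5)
Your proof is correct, and it takes a genuinely different route from the paper in both directions, although both arguments funnel through the same pivot (stability plus Theorem~\ref{thm:sq}). For contact~$\implies$~QR, the paper invokes the Goze--Remm contact basis (Lemma~\ref{lem:contactbasis}) and reads off from the structure constants that $E_1=x_\varphi$ never appears in $[\ker(B_\varphi),\mathfrak{s}]$; you instead verify the Tauvel--Yu criterion by the two-line computation $c\,\varphi(x)=\varphi([x,y])=B_\varphi(x,y)=0$, using $\varphi(x)\neq 0$ from Lemma~\ref{lem:kernel}, which is more elementary and dispenses with Lemma~\ref{lem:contactbasis} entirely (your preliminary observation that a contact form is automatically regular is also needed, and correct). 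For QR~$\implies$~contact, the paper stays on the quasi-reductive side: it takes a regular form of reductive type whose kernel is spanned by a semisimple element $h$, builds a Cartan--Weyl basis containing $h$, and perturbs by $\eps h^*$ using semicontinuity of the kernel dimension; you instead pass to a stable form via Theorem~\ref{thm:sq} (this direction being Duflo et al.), use the stability identity $[x,\mathfrak{g}]\cap\mathbb{C}x=\{0\}$ to choose a complement $M\supseteq[x,\mathfrak{g}]$ and the functional $\xi$ with $\xi(x)=1$, $\xi|_M=0$, and run a linear-pencil rank argument to show that $x$ spans $\ker(B_{\varphi+t\xi})$ for all but finitely many $t$, then pick $t$ with $(\varphi+t\xi)(x)\neq 0$. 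Your perturbation is explicit linear algebra and avoids both the semisimplicity of the Reeb candidate and the Cartan--Weyl construction from \textbf{\cite{BCD}}; what the paper's route buys in exchange is a contact form produced directly from a form of reductive type with a semisimple kernel generator, which is exactly the feature exploited in the Epilogue's discussion of proset algebras. One small point worth keeping: your claim that a stable form is regular (the conjugacy neighborhood meets the dense regular locus, and conjugate kernels have equal dimension) is a genuine extra step not stated in the paper's preliminaries, but your justification of it is sound.
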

\begin{proof}
We begin with the reverse implication (QR$\implies$contact). Let $\mathfrak{s}$ be an index-one, quasi-reductive seaweed. If $\mathfrak{s}$ has nonzero center, then clearly $\mathfrak{s}$ is contact, so assume $Z(\mathfrak{s})=0.$ Let $\varphi\in\mathfrak{s}^*$ be a regular form of reductive type, and let $h$ be a semisimple element of $\mathfrak{s}$ that spans $\ker(B_{\varphi}).$ If $\varphi(h)\neq 0,$ then $\mathfrak{s}$ is contact by Lemma~\ref{lem:kernel}.

If $\varphi(h)=0,$ then construct a Cartan subalgebra $\mathfrak{h}\subset\mathfrak{s}$ that contains $h,$ and let $\mathscr{B}_h$ be a Cartan-Weyl basis for $\mathfrak{s}$ generated from $h$ (see Coll et al. \textbf{\cite{BCD}}, Lemma 55). Since, for all $x\in\mathfrak{s},$  $\dim(\ker(B_{\varphi}))$ is upper semicontinuous with respect to $\varphi(x),$ there exists $\eps>0$ and a one-form $\psi\in\mathfrak{s}^*$ such that if $\psi(x)=\varphi(x)$ for all $x\in\mathscr{B}_{\mathfrak{h}}\setminus\{h\}$ and $|\psi(h)|<2\eps,$ then $$\ind\mathfrak{g}=1\leq\dim\ker(B_{\psi})\leq\dim\ker(B_{\varphi})=1;$$ that is, if $\mathscr{B}_h^*$ is the basis of $\mathfrak{s}^*$ dual to $\mathscr{B}_h,$ then we may choose an $\eps>0$ such that $\psi=\varphi+\eps h^*$ is regular on $\mathfrak{s}.$ Fix such a $\psi,$ and note that
$$\ker(B_{\psi})=\ker(B_{\varphi})=\mathbb{C}h$$
since $\mathscr{B}_h$ is a Cartan-Weyl basis. Therefore, $\psi$ is a regular one-form on $\mathfrak{s}$ such that $\ker(B_{\psi})=\mathbb{C}h$ and $\psi(h)=\eps\neq 0,$ so $\mathfrak{s}$ is contact by Lemma~\ref{lem:kernel}.

For the forward implication (contact$\implies$QR), assume that $\mathfrak{s}$ is a contact seaweed with contact form $\varphi.$ We claim that $\varphi$ is also a stable one-form. To establish the claim, let $$\mathscr{B}_{\varphi}=\{E_1,E_2,\dots,E_{\dim(\mathfrak{s})}\}$$ be a contact basis of $\mathfrak{s}$ with respect to $\varphi,$ and let $\mathscr{B}^*_{\varphi}$ be the basis of $\mathfrak{s}^*$ dual to $\mathscr{B}_{\varphi}.$ Then $\varphi=E_1^*$ and \begin{equation}\label{eqn:eval}
\varphi([E_1,E_i])=E_1^*\left(\sum_{r=2}^{\dim\mathfrak{s}}c_{1,i}^rE_r\right)=0,
\end{equation}where $c_{i,j}^r\in\mathbb{R}$ for all $i=1,\dots,\dim(\mathfrak{s});$ that is, $\ker(B_{\varphi})=\mathbb{C}E_1.$ Therefore, by Equation~(\ref{eqn:eval}), $$[\ker(B_{\varphi}),\mathfrak{s}]\cap\ker(B_{\varphi})=\{0\},$$ so $\varphi$ is stable by Lemma~\ref{lem:kerstable}. An application of Theorem~\ref{thm:sq} completes the proof.
\end{proof}

\section{Epilogue}

A \textit{Lie proset algebra} $\mathfrak{P}$ is a subalgebra of a reductive Lie algebra $\mathfrak{r}$ that contains a Cartan subalgebra of $\mathfrak{r}.$\footnote{The moniker ``proset'' derives from the fact that, in the classical types, the matrix forms of these algebras naturally correspond to pre-ordered sets (see \textbf{\cite{ContactToral}}).} The category of Lie proset algebras contains seaweed algebras and Lie poset algebras\footnote{If $\mathfrak{r}$ is a reductive Lie algebra with a Cartan subalgebra $\mathfrak{h}$ and a corresponding Borel subalgebra $\mathfrak{b}$, then a Lie poset algebra $\mathfrak{g}$ is any algebra such that $\mathfrak{h}\subset\mathfrak{g}\subset\mathfrak{b}$.  These algebras are Lie semidirect products and are naturally defined by a partial ordering on a set, where the relations are encoded in a Hasse diagram (see \textbf{\cite{CG}}).} as subcategories. In fact, the proof of Theorem~\ref{thm:main} shows that if $\mathfrak{P}$ is an index-one, quasi-reductive Lie proset algebra, then $\mathfrak{P}$ is contact -- and so stable. However, we do not yet know whether the notions of quasi-reductivity and stability are equivalent for Lie proset algebras, and so we are left with the question of whether or not every contact Lie proset algebra is quasi-reductive. One approach to addressing this question is to show that a contact Lie proset algebra admits a contact form with a semisimple Reeb vector. This suffices since for a centerless, contact Lie proset algebra $\mathfrak{P}$ with contact form $\varphi$, $\faktor{\ker(B_{\varphi})}{Z(\mathfrak{P})}=\ker(B_{\varphi})=\mathbb{C}x_{\varphi},$ where $x_{\varphi}$ is the Reeb vector corresponding to $\varphi.$ Therefore, if $x_{\varphi}$ is a semisimple element of $\mathfrak{P},$ then $\varphi$ is a form of reductive type. Such an approach is reliant on a bevy of contact Lie proset algebras; however, such examples are difficult to come by, mainly due to lack 
of a comprehensive computational index theory which specializes to the (rather complete) index theory of seaweeds (see \textbf{\cite{ACam}}, \textbf{\cite{DK}}, and \textbf{\cite{Joseph2006}}) and the nascent (though substantive) index theory of Lie poset algebras (see \textbf{\cite{seriesA}} and \textbf{\cite{BCD}}).


\begin{thebibliography}{abcd}
\bibitem{defcontact}
M. A. Alvarez, M. C. Rodr\'{i}guez-Vallarte, and G. Salgado. ``Deformation theory of contact Lie algebras as double extensions." \textit{Proc. Am. Math. Soc.}, 149(5): 1827-1836, 2021.

\bibitem{sq}
K. Ammari. ``Stabilit\'e des sous-alg\`ebres biparaboliques des alg\`ebres de Lie simples." \textit{J. Lie Theory}, 32(1): 239-260, 2022.

\bibitem{sqparex}
K. Ammari. ``Stabilit\'e des sous-alg\`ebres paraboliques des alg\`ebres de Lie simples exceptionnelles." \textit{Bull. des Sci. Math.}, 138(5): 614-625, 2014.

\bibitem{sqpar}
K. Ammari. ``Stabilit\'e des sous-alg\`ebres paraboliques de $\mathfrak{so}(n)$." \textit{J. Lie Theory}, 24(1): 97-122, 2014.

\bibitem{anh}
N. H. Anh. ``Algebraic groups with square-integrable representations." \textit{Bull. Amer. Math. Soc.}, 80(3): 539-542, 1974.

\bibitem{BM}
K. Baur and A. Moreau. ``Quasi-reductive (bi)parabolic subalgebras in reductive Lie algebras." \textit{Ann. Inst. Fourier}, 61(2): 417-451, 2011.

\bibitem{ACam}
A. Cameron, V. E. Coll, Jr., and M. Hyatt. ``Combinatorial index formulas for Lie algebras of seaweed type." \textit{Commun. Algebra}, 48(12): 5430-5454, 2020.

\bibitem{CG}
V. E. Coll, Jr. and M. Gerstenhaber. ``Cohomology of Lie semidirect products and poset algebras." \textit{J. Lie Theory}, 26(1): 79--95, 2016.


\bibitem{seriesA}
V. E. Coll, Jr.  and N. Mayers. ``The index of Lie poset algebras." \textit{J. Comb. Theory Ser. A}, 177: 105331, 2021.

\bibitem{ht2} V. E. Coll, Jr., N. Mayers, and N. Russoniello. ``Contact Lie poset algebras." \textit{Electron. J. Comb.}, 29(3): \#P3.35, 2022.

\bibitem{BCD} V. E. Coll, Jr., N. Mayers, and N. Russoniello. ``The index and spectrum of Lie poset algebras of types B, C, and D." \textit{Electron. J. Comb.}, 28(3): \#P3.47, 2021.

\bibitem{contactA} V. E. Coll, Jr., N. Mayers, N. Russoniello, and G. Salgado. ``Contact seaweeds." \textit{Pac. J. Math.}, 320(1): 45-60, 2022.

\bibitem{contactC} V. E. Coll, Jr. and N. Russoniello. ``Contact seaweeds II: type-C." arXiv:2211.00095, 2022.

\bibitem{DK}
V. Dergachev and A. Kirillov. ``Index of Lie algebras of seaweed type." \textit{J. Lie Theory}, 10(2): 331-343, 2000.

\bibitem{Diatta}
A. Diatta. ``Left invariant contact structures on Lie groups." \textit{Diff. Geom. Appl.}, 26(5): 544-552, 2008.

\bibitem{D}
J. Dixmier. \textit{Enveloping Algebras}. Vol. 14. Newnes, 1977.


\bibitem{Duflo1982}
M. Duflo. ``Th\'eorie de Mackey pour les groupes de Lie alg\'ebriques." \textit{Acta. Math.}, 149: 153-213, 1982.

\bibitem{duflo}
M. Duflo, M. S. Khalgui, and P. Torasso. ``Alg\`ebres de Lie quasi-r\'eductives." \textit{Transform. Groups}, 17: 417-470, 2012.

\bibitem{Geiges}
H. Geiges. ``Contact Geometry," in F. J. E. Dillen and L. C. A. Verstraelen, editors, \textit{Handbook of Differential Geometry.} Vol. 2, chapter 5: 315-382. Elsevier, 2006.

\bibitem{GR}
M. Goze and E. Remm. ``Contact and Frobeniusian forms on Lie groups." \textit{Diff. Geom. Appl.}, 35: 74-94, 2014.

\bibitem{Gromov}
M. Gromov. ``Stable mappings of foliations into manifolds." \textit{Izv. Akad. Nauk SSSR Ser. Mat.}, 33(4): 707-734, 1969; \textit{Math. USSR-Izv}, 3(4): 671-694, 1969.

\bibitem{Joseph2006}
A. Joseph. ``On semi-invariants and index for biparabolic (seaweed) algebras, I." \textit{J. Algebra}, 305(1): 487-515, 2006.

\bibitem{ContactToral}
N. Mayers and N. Russoniello. ``On toral posets and contact Lie algebras." \textit{J. Geom. Phys.}, 190: 104861, 2023.

\bibitem{MY}
A. Moreau and O. Yakimova. ``Coadjoint orbits of reductive type of parabolic and seaweed Lie subalgebras." \textit{Int. Math. Res. Not.}, 2012(19): 4475-4519, 2012.


\bibitem{PanRais}
D. Panyushev. ``An extension of Ra\"is' theorem and seaweed subalgebras of simple Lie algebras." \textit{Ann. de l'Institut Fourier}, 55(3): 693-715, 2005.

\bibitem{PY}
D. Panyushev and O. Yakimova. ``On seaweed subalgebras and meander graphs in type D." \textit{J. Pure Appl. Algebra}, 222(11): 3414-3431, 2018.

\bibitem{InvContact}
G. Salgado-Gonz\'{a}lez. ``Invariants of contact Lie algebras." \textit{J. Geom. Phys.}, 144: 388-396, 2019.

\bibitem{TY}
P. Tauvel and R. Yu. ``Indice et formes lin{\'e}aires stables dans les alg{\`e}bres de Lie." \textit{J. Algebra}, 273(2): 507-516, 2004.

\end{thebibliography}
\end{document}